\newcommand{\Dotfill}{\leaders\hbox to 8pt{\hss.\hss}\hfill}
\tikzset{
v/.style={draw, fill, circle, minimum size=1.5mm, inner sep=0},
b/.style={draw , circle, minimum size=2.5mm, inner sep=.5mm},
e/.style={very thick},
vs/.style={draw, fill, circle, minimum size=1mm, inner sep=0},
bs/.style={draw, fill, circle, minimum size=1.5mm, inner sep=0mm},
es/.style={thick}
}
\newlength{\nodeheight}
\newlength{\nodewidth}
\newaliascnt{thmauto}{thmcounter}
\newaliascnt{Defauto}{thmcounter}
\newaliascnt{exauto}{thmcounter}
\newaliascnt{lemauto}{thmcounter}
\newaliascnt{propauto}{thmcounter}
\newaliascnt{corauto}{thmcounter}
\newaliascnt{remauto}{thmcounter}
\newaliascnt{convauto}{thmcounter}
\renewcommand{\sectionautorefname}{Section}
\renewcommand{\subsectionautorefname}{Section}
\renewcommand{\subsubsectionautorefname}{Section}
\newtheorem{thmA}{Theorem}
\newtheorem{thm}[thmauto]{Theorem}
\newtheorem{prop}[propauto]{Proposition}
\newtheorem{lem}[lemauto]{Lemma}
\newtheorem{cor}[corauto]{Corollary}
\theoremstyle{definition}
\newtheorem{Def}[Defauto]{Definition}
\newtheorem{rem}[remauto]{Remark}
\DeclareMathOperator{\coker}{coker}
\DeclareMathOperator{\Ind}{Ind}
\DeclareMathOperator{\Res}{Res}
\DeclareMathOperator{\Pa}{P}
\providecommand{\CP}{\ensuremath\mathcal C_{\Pa}}
\providecommand{\N}{\ensuremath\mathbb N}
\providecommand{\C}{\ensuremath\mathbb C}
\title{Corrigendum to ``Representation stability for diagram algebras'' [J. Algebra 638 (2024), 625--669]}
\date{}
\author{Peter Patzt}
\address{Department of Mathematics, University of Oklahoma}
\email{ppatzt@ou.edu}
\begin{document}

\renewcommand{\sectionautorefname}{Section}
\renewcommand{\subsectionautorefname}{Section}
\renewcommand{\subsubsectionautorefname}{Section}

\maketitle

After the publication of \cite{diagalgrepstab}, we noticed a serious mistake in the treatment of the partition algebra. In particular, we assumed in the proof of Lemma 4.9 that the trivial $\Pa_n$-module is $\Pa_n(\emptyset)$ but it is actually indexed by the partition $(n)$. In this erratum, we fix this mistake and give a complete proof of Theorem C of \cite{diagalgrepstab}:

\setcounter{thmA}{2}
\begin{thmA}\label{thmA:P} Under the assumption that $R = \mathbb C$ and $\delta\in \mathbb C\setminus\N$,
a finitely presented $\CP$-module gives rise to a representation stable sequence of representations of the partition algebras.
\end{thmA}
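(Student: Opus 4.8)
The plan is to verify representation stability first for the representable modules $\CP(m,-)$, and then to bootstrap to an arbitrary finitely presented module using right-exactness. The decisive simplification from the hypothesis $\delta\in\C\setminus\N$ is that every partition algebra $\Pa_n=\Pa_n(\delta)$ is split semisimple over $\C$, with simple modules the cell modules $\Pa_n(\lambda)$ for $|\lambda|\le n$; moreover a partition $\lambda$ is a legitimate label for $\Pa_m$ as soon as $m\ge|\lambda|$, so no ``padding'' of the labels is needed. Under semisimplicity, a sequence $(M_n)$ of $\Pa_n$-modules with compatible transition maps $M_n\to M_{n+1}$ is representation stable exactly when those maps are eventually injective with $\Pa_{n+1}$-generating image, and when, for each fixed $\lambda$, the multiplicity $[\Pa_n(\lambda):M_n]$ is independent of $n$ for $n\gg|\lambda|$. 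Throughout I would use the partition-algebra branching rule along the chain $\Pa_{n-1}\subset\Pa_{n-1/2}\subset\Pa_n$ --- restriction across one half-step removes at most a box from $\lambda$, across the other adds at most a box --- which for a fixed $\lambda$ is itself stable once $n$ is large.

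The heart of the corrected argument, and the replacement for the erroneous Lemma~4.9, is the explicit determination of the representable modules. For each $m$ I would identify $\CP(m,-)_n$ as a $\Pa_n$-module and show it decomposes as $\CP(m,-)_n\cong\bigoplus_\lambda\Pa_n(\lambda)^{\oplus c_{m,\lambda}(n)}$ with $c_{m,\lambda}(n)$ independent of $n$ once $n$ is large relative to $m$ --- either by a direct count of the relevant partition diagrams into $n$, stratified by propagation number, or by using the idempotent relating $\Pa_n$ to $\Pa_{n-1}$ to peel off one propagating strand and induct on $m$. The original proof went astray precisely at the base of this recursion: the trivial $\Pa_n$-module is the one-dimensional cell module $\Pa_n((n))$ and \emph{not} $\Pa_n(\emptyset)$ (the cell module $\Pa_n(\emptyset)$ has dimension the $n$th Bell number and is spanned by the diagrams with no propagating block), so every identification of the summands of $\CP(m,-)_n$ has to be redone with the correct labels. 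Granting the stable decomposition, the diagram description shows that the transition maps $\CP(m,-)_n\to\CP(m,-)_{n+1}$ are eventually split injective with $\Pa_{n+1}$-generating image, so each $\CP(m,-)$ is itself a representation stable sequence.

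Finally, let $M$ be finitely presented, with a presentation $\bigoplus_j\CP(m_j,-)\to P\to M\to 0$ in which $P=\bigoplus_i\CP(n_i,-)$ is a finite sum of representables; call the first map $\phi$. Applying the exact functor $\Hom_{\Pa_n}(\Pa_n(\lambda),-)$ on the semisimple category of $\Pa_n$-modules yields $[\Pa_n(\lambda):M_n]=[\Pa_n(\lambda):P_n]-\rk(\phi_n^\lambda)$, where $\phi_n^\lambda$ is the linear map induced by $\phi_n$ on the $\lambda$-multiplicity spaces. The first term is eventually constant by the previous paragraph; for the second, comparing the multiplicity spaces at $n$ and $n+1$ through the eventually injective transition maps (and the branching rule) shows that $\rk(\phi_n^\lambda)$ is non-decreasing in $n$ and bounded above by the eventually-constant multiplicity $[\Pa_n(\lambda):\bigl(\bigoplus_j\CP(m_j,-)\bigr)_n]$, hence is eventually constant --- so $[\Pa_n(\lambda):M_n]$ is eventually constant. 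Eventual injectivity and the generating property for the transition maps of $M$ follow from finite generation of $M$ together with the corresponding properties of $P$ (alternatively, one invokes local noetherianity of $\xmod{\CP}$ to control the kernel of $P\to M$ directly). The step I expect to be the genuine obstacle is the stable decomposition of $\CP(m,-)_n$ into simple $\Pa_n$-modules \emph{with the correct labels}: this is the content that replaces Lemma~4.9, it is where the partition-algebra combinatorics --- branching, propagation number, and the identity of the ``small'' cell modules, above all the trivial one --- must be handled carefully, and everything downstream of it is formal.
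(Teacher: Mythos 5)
There is a genuine gap, and it is located exactly where you predicted the difficulty would be --- but your proposed resolution quietly reinstates the error the corrigendum is fixing. You assert that ``no padding of the labels is needed'' and that multiplicity stability should mean that, for each \emph{fixed} partition $\lambda$, the multiplicity of $\Pa_n(\lambda)$ in $V_n$ is eventually constant; correspondingly you claim a stable decomposition $\CP(m,-)_n\cong\bigoplus_\lambda\Pa_n(\lambda)^{\oplus c_{m,\lambda}(n)}$ with fixed labels and eventually constant $c_{m,\lambda}(n)$. This is false, and your own observation about the trivial module already refutes it: since the trivial $\Pa_n$-module is $\Pa_n((n))$, the representable $M(0)$ has as its only constituent the simple with label $(n)=\emptyset[n]$, whose first row grows with $n$. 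More generally $M(m)_n=\Ind_{\Pa_m\otimes\Pa_{n-m}}^{\Pa_n}\Pa_m\otimes\C$ has all of its constituents of the form $\Pa_n(\lambda[n-i])$ with $|\lambda|\le m$, $i\le 2m$ (\autoref{4.12}), so for large $n$ \emph{no} fixed label occurs at all; your fixed-label multiplicity condition becomes vacuous and fails to express the stability phenomenon. The correct statement, which is the definition used here, tracks the multiplicities $\alpha_{\lambda,k,n}$ of the padded simples $\Pa_n(\lambda[n-k])$, and proving that these are eventually constant is the actual content of the theorem.

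Once the labels are padded, the step you wave at (``a direct count of diagrams stratified by propagation number, or peel off a propagating strand and induct'') becomes the hard analytic input, and nothing in your outline supplies it. Restricting $\Pa_n(\mu[n-i])$ along $\Pa_m\otimes\Pa_{n-m}$ against $\Pa_m(\lambda)\otimes\C$ produces the reduced Kronecker coefficient $\bar g_{\lambda,(n-m),\mu[n-i]}$, in which \emph{two} of the three arguments grow linearly in $n$; its eventual constancy is not a consequence of Murnaghan's theorem alone and is exactly what \autoref{Kroneckerstab} and \autoref{size} establish, via the Bowman--De~Visscher--Orellana formula (\autoref{formula}) and the Pieri/Littlewood--Richardson manipulation of \autoref{LRflip}. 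Your treatment of a general finitely presented module is also thinner than it needs to be: the rank-monotonicity argument for $\phi_n^\lambda$ presupposes a canonical comparison of multiplicity spaces across $n$, which is precisely what is not available for the growing labels; the paper instead detects constituents of the kernel and cokernel through the exact truncation functors $\tau_{n,a}$, using the stability degree bound together with \autoref{4.8}, and proves multiplicity stability by a double induction on $(i,|\mu|)$ with \autoref{4.8}(a)--(d). In short, the formal bootstrapping you describe is fine in spirit, but the theorem stands or falls with the stability of $\bar g_{\lambda[n],\mu,(n-k)}$ for padded labels, and that ingredient is missing from your proposal.
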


In order to this, we have to change the definition of representation stability to the following:

\begin{Def} A $\CP$-module $V$ is \emph{representation stable} if it satisfies the following three conditions.
\begin{description}
\item[Injectivity] The map $\phi_n\colon V_n \to V_{n+1}$ is injective for all large enough $n\in\mathbb N$.
\item[Surjectivity] The induced map $\Ind_{\Pa_n}^{\Pa_{n+1}} \phi_n\colon \Ind_{\Pa_n}^{\Pa_{n+1}}V_n \to V_{n+1}$ is surjective for all large enough $n\in \mathbb N$.
\item[Multiplicity stability] If we write
\[ V_n \cong \bigoplus_{0\le k \le n}\bigoplus_{\lambda\in \overline \Pi_{n-k}} \mathfrak \Pa_n(\lambda[n-k])^{\oplus \alpha_{\lambda,k,n}},\]
then for fixed $\lambda$ and $k$, $m_{\lambda,k,n}$ is independent of $n$ for all large enough $n\in \mathbb N$.\footnote{Recall that $\overline \Pi_{n-k}$ is the set of partition $\lambda = ( \lambda_1 \ge \dots \ge \lambda_\ell)$ such that $n-k \ge |\lambda| + \lambda_1$, i.e.\ such that $\lambda[n-k] = (n-k - |\lambda|\ge \lambda_1\ge \dots \ge \lambda_\ell)$ is a partition.}
\end{description}
We say that $V$ is \emph{representation stable from $N$ on} if the conditions of injectivity, surjectivity, and multiplicity stability are satisfied for all $n\ge N$. If such an $N$ exists, $V$ is \emph{uniformly representation stable}.
\end{Def}

\begin{rem}
Despite not mentioned in Theorem A, B, and C, we actually prove uniform representation stability in Theorem 4.13, Theorem 4.14, and \autoref{partitionthm}. In particular, there are only finitely many partitions $\lambda$ and $k \in \mathbb N$ such that $\alpha_{\lambda,k,n}$ are nonzero for some $n\in \N$.
\end{rem}

\subsection*{Background on Kronecker Coefficients}

For $\lambda,\mu,\nu \in \Pi_n$, the \emph{Kronecker coefficient} $g_{\lambda,\mu,\nu}$ is defined as
\[ g_{\lambda,\mu,\nu} := [\mathfrak S_n(\lambda) \otimes \mathfrak S_n(\mu), \mathfrak S_n(\nu)].\]
Murnaghan's Theorem says that for any partitions $\lambda,\mu,\nu$ (not necessarily of the same size), the sequence
\[ g_{\lambda[n],\mu[n],\nu[n]},\]
where $n$ necessarily has to be at least $\max(|\lambda|+\lambda_1, |\mu|+\mu_1, |\nu|+\nu_1)$, stabilizes for $n$. The \emph{reduced Kronecker coeffcients} are defined as
\[ \bar g_{\lambda,\mu,\nu} :=  \lim_{n\to \infty}  g_{\lambda[n],\mu[n],\nu[n]}.\]

Recall that the Littlewood--Richardson coefficients are defined as
\[ c^\nu_{\lambda,\mu} := [\Ind^{\mathfrak S_n}_{\mathfrak S_l \times \mathfrak S_m} \mathfrak S_l(\lambda) \otimes \mathfrak S_m(\mu), \mathfrak S_n(\nu)]\]
for $\lambda \in \Pi_l, \mu\in \Pi_m, \nu\in \Pi_n$ and $l+m=n$. We will also use the notation
\[ c^\nu_{\alpha,\beta,\gamma} = [\Ind^{\mathfrak S_n}_{\mathfrak S_a \times \mathfrak S_b \times \mathfrak S_c} \mathfrak S_a(\alpha) \otimes \mathfrak S_b(\beta) \otimes \mathfrak S_c(\gamma), \mathfrak S_n(\nu)] = \sum_\xi c^\nu_{\alpha, \xi}c^\xi_{\beta ,\gamma}\]
for $\alpha\in\Pi_a, \beta\in \Pi_b, \gamma\in \Pi_c, \nu \in \Pi_n$ and $a+b+c=n$.

Bowman--De~Visscher--Orellana give the following formula for reduced Kronecker coefficients in terms of Littlewood--Richardson coefficients and Kronecker coefficients.

\begin{thm}[{\cite[Corollary 4.5]{BDVO}}]
\[ \bar g_{\lambda,\mu,\nu} = \sum_{\substack{\alpha,\beta,\gamma,\\ \pi, \rho,\sigma}} c^\lambda_{\alpha,\beta,\pi}\,\,c^\mu_{\alpha,\gamma,\rho}\,c^\nu_{\beta,\gamma,\sigma}\,g_{\pi,\rho,\sigma}\]
\end{thm}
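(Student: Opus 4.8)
The plan is to move everything into the character ring of the symmetric groups and to combine Murnaghan's theorem (quoted above) with a single stable branching identity. The identity I would use is the ``stable Pieri rule'': for $n$ large (depending on $\kappa$),
\[ \Ind_{\mathfrak{S}_{n-k}\times\mathfrak{S}_{k}}^{\mathfrak{S}_{n}}\bigl(\mathbf 1\boxtimes \mathfrak{S}_k(\kappa)\bigr)\;\cong\;\bigoplus_{\eta\ :\ \kappa/\eta\ \text{is a horizontal strip}}\mathfrak{S}_n(\eta[n]),\]
which follows from $h_{n-k}s_\kappa=\sum_\lambda c^\lambda_{(n-k),\kappa}s_\lambda$ together with the observation that for $n$ large every partition occurring has a first row of length $\ge n-k$, so it is of the form $\eta[n]$ with $\kappa/\eta$ a horizontal strip. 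The matrix indexed by partitions with $(\kappa,\eta)$-entry equal to the indicator that $\kappa/\eta$ is a horizontal strip is unitriangular for the containment order, hence invertible over $\mathbb Z$; inverting it expresses $\mathfrak{S}_n(\lambda[n])$, for $n$ large, as an $n$-independent integer combination of the modules $\Ind_{\mathfrak S_{n-k}\times\mathfrak S_k}^{\mathfrak S_n}(\mathbf 1\boxtimes \mathfrak S_k(\kappa))$ with $\kappa\subseteq\lambda$.

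The heart of the argument is to compute $\mathfrak S_n(\lambda[n])\otimes\mathfrak S_n(\mu[n])$ for $n$ large. I would expand both tensor factors via the rule above and then evaluate the pointwise product of two modules induced from Young subgroups $H=\mathfrak S_{n-k_1}\times\mathfrak S_{k_1}$ and $K=\mathfrak S_{n-k_2}\times\mathfrak S_{k_2}$ by Mackey's formula. For $n$ large the double cosets $H\backslash\mathfrak S_n/K$ are indexed by a single small overlap parameter $d\in\{0,\dots,\min(k_1,k_2)\}$, and $H\cap{}^gK\cong\mathfrak S_a\times\mathfrak S_b\times\mathfrak S_c\times\mathfrak S_d$ with $a$ large; restricting $\mathfrak S_{k_1}(\kappa_1)$ and $\mathfrak S_{k_2}(\kappa_2)$ to the relevant factors produces Littlewood--Richardson coefficients, while the overlap factor $\mathfrak S_d$ receives $\mathfrak S_d(\tau_1)\otimes\mathfrak S_d(\tau_2)$, i.e.\ an honest Kronecker product, contributing a Kronecker coefficient $g_{\tau_1,\tau_2,\theta}$. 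Re-inducing the four-fold tensor product and applying the stable Pieri rule once more rewrites the whole product as an $n$-independent combination of the $\mathfrak S_n(\eta[n])$; extracting the coefficient of $\mathfrak S_n(\nu[n])$ and letting $n\to\infty$ (legitimate by Murnaghan) yields a formula for $\bar g_{\lambda,\mu,\nu}$ — at this stage a somewhat unwieldy alternating sum over $\kappa_1,\kappa_2,d$ and the auxiliary partitions.

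What remains is to recognize this alternating sum as the stated closed form, using the standard manipulations of Littlewood--Richardson coefficients — associativity $c^\nu_{\alpha,\beta,\gamma}=\sum_\xi c^\nu_{\alpha,\xi}c^\xi_{\beta,\gamma}$, symmetry, and the fact that the ``free'' horizontal-strip factors introduced by inverting and by re-applying the stable Pieri rule are exactly what rebuilds the long first rows, i.e.\ what promotes the six small partitions $\alpha,\beta,\gamma,\pi,\rho,\sigma$ to the possibly larger $\lambda,\mu,\nu$. \textbf{The main obstacle} is precisely this bookkeeping: tracking the sign-alternating inverse-Pieri coefficients and checking that, after all cancellation, they recombine against the LR coefficients coming from the Mackey restrictions into the single honest (nonnegative) coefficients $c^\lambda_{\alpha,\beta,\pi}$, $c^\mu_{\alpha,\gamma,\rho}$, $c^\nu_{\beta,\gamma,\sigma}$, leaving only $g_{\pi,\rho,\sigma}$ for the overlap. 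A cleaner way to organize the same computation, which makes the ``three pairwise-shared partitions plus one Kronecker core'' shape transparent, is to run it diagrammatically in the partition category (equivalently Deligne's category $\underline{\mathrm{Rep}}(S_t)$ for generic $t$, whose simple objects interpolate the $\mathfrak S_n(\lambda[n])$ and whose tensor structure constants are the reduced Kronecker coefficients): there $\alpha,\beta,\gamma$ enumerate the blocks of a partition diagram joining exactly two of the three boundary regions ($\lambda\mu$, $\lambda\nu$, $\mu\nu$ respectively), $\pi,\rho,\sigma$ record how the remaining ``core'' attaches to each region, and $g_{\pi,\rho,\sigma}$ counts the core joining all three.
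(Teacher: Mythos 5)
This statement is quoted from the literature ([BDVO, Corollary 4.5]) and the paper gives no proof of it, so there is nothing internal to compare your attempt against; it has to stand on its own.

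Your setup is correct as far as it goes: the stable Pieri rule you state does hold for $n\ge|\kappa|+\kappa_1$ (it is essentially \autoref{LRflip} read in the other direction), the horizontal-strip matrix is indeed unitriangular and hence invertible over $\Z$, the double cosets of two maximal Young subgroups are indexed by the single overlap parameter $d$, and the Mackey computation does produce Littlewood--Richardson coefficients on the three "mixed" factors and an honest Kronecker coefficient on the $\mathfrak S_d$ factor. But you then stop exactly where the theorem begins. The assertion that the sign-alternating coefficients introduced by inverting the Pieri matrix (once for each of $\lambda$ and $\mu$ on the input side, and a third time to extract the coefficient of $\mathfrak S_n(\nu[n])$ from the re-induced Mackey terms) cancel and recombine into the single manifestly nonnegative expression $\sum c^\lambda_{\alpha,\beta,\pi}\,c^\mu_{\alpha,\gamma,\rho}\,c^\nu_{\beta,\gamma,\sigma}\,g_{\pi,\rho,\sigma}$ \emph{is} the content of the theorem; everything preceding it is routine stable character theory. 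Labelling this step "the main obstacle" and then asserting the answer leaves the proof with a genuine gap, and it is not a small one: a priori your procedure only yields some $n$-independent integer (possibly signed) expression, and nothing you have written rules out that the collapse produces a different arrangement of the six auxiliary partitions. The closing appeal to Deligne's category does not close the gap either: the fact that the tensor structure constants of the simple objects of $\underline{\mathrm{Rep}}(S_t)$ for generic $t$ are the reduced Kronecker coefficients, and the identification of the three "pairwise" blocks and the triple core with the stated coefficients, are themselves theorems (indeed, essentially the route BDVO take) that you would have to prove rather than invoke as a picture. To complete the argument along symmetric-function lines you would need an explicit identity doing the recombination for you, e.g.\ Littlewood's formula $(s_\alpha s_\beta)*s_\nu=\sum_{\delta,\epsilon}c^\nu_{\delta,\epsilon}(s_\alpha*s_\delta)(s_\beta*s_\epsilon)$ together with a controlled expansion of $s_{\lambda[n]}$, rather than an unexamined inclusion–exclusion.
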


This formula has three consequences that we are going to use. We also give earlier references.

\begin{cor}[{\cite[Theorem 2.9.22]{jameskerber}}]\label{triangle}
\[ \bar g_{\lambda,\mu,\nu} = 0\quad \text{ if $|\lambda|+|\mu| < |\nu|$.}\]
\end{cor}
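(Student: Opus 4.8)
The plan is to read off the vanishing directly from the Bowman--De~Visscher--Orellana formula for $\bar g_{\lambda,\mu,\nu}$, using only positivity and the bookkeeping of partition sizes. First I would observe that every factor occurring in that sum is a nonnegative integer: the Littlewood--Richardson coefficients $c^\lambda_{\alpha,\beta,\pi}$, $c^\mu_{\alpha,\gamma,\rho}$, $c^\nu_{\beta,\gamma,\sigma}$ are multiplicities of irreducibles in induced modules, and the Kronecker coefficient $g_{\pi,\rho,\sigma}$ is a multiplicity in a tensor product. Hence $\bar g_{\lambda,\mu,\nu}$ is a sum of nonnegative terms, so it vanishes precisely when each term vanishes, and it suffices to show that no sextuple $(\alpha,\beta,\gamma,\pi,\rho,\sigma)$ can make all four factors simultaneously nonzero once $|\lambda| + |\mu| < |\nu|$.

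Next I would exploit the size constraints forced by a nonzero structure constant. A nonzero $c^\lambda_{\alpha,\beta,\pi}$ requires $|\alpha| + |\beta| + |\pi| = |\lambda|$, since the induction is from $\mathfrak S_a \times \mathfrak S_b \times \mathfrak S_c$ into $\mathfrak S_{a+b+c}$; similarly $c^\mu_{\alpha,\gamma,\rho} \neq 0$ forces $|\alpha| + |\gamma| + |\rho| = |\mu|$ and $c^\nu_{\beta,\gamma,\sigma} \neq 0$ forces $|\beta| + |\gamma| + |\sigma| = |\nu|$. Finally, by definition the Kronecker coefficient $g_{\pi,\rho,\sigma}$ can only be nonzero when $\pi$, $\rho$, $\sigma$ are partitions of a common integer, say $p := |\pi| = |\rho| = |\sigma|$. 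Combining these, whenever a term of the sum is nonzero I would compute
\[ |\lambda| + |\mu| - |\nu| = \bigl(|\alpha| + |\beta| + p\bigr) + \bigl(|\alpha| + |\gamma| + p\bigr) - \bigl(|\beta| + |\gamma| + p\bigr) = 2|\alpha| + p \ge 0, \]
so a nonzero term forces $|\lambda| + |\mu| \ge |\nu|$. Contrapositively, if $|\lambda| + |\mu| < |\nu|$ then every term of the Bowman--De~Visscher--Orellana sum vanishes, whence $\bar g_{\lambda,\mu,\nu} = 0$.

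I do not anticipate any real obstacle: the argument uses nothing beyond nonnegativity of the coefficients and elementary counting of boxes, with all the substance already packaged in the cited formula. The only point requiring a little care is to state correctly which size identity each coefficient enforces (in particular that $g$ is only defined for three partitions of equal size), but this is immediate from the definitions recalled above. If one preferred an argument independent of the Bowman--De~Visscher--Orellana formula, one could instead argue representation-theoretically --- comparing, via characters, the partitions that can appear in $\mathfrak S_n(\lambda[n]) \otimes \mathfrak S_n(\mu[n])$ with those in $\mathfrak S_n(\nu[n])$ for large $n$ --- but invoking the formula is cleaner and is already available to us.
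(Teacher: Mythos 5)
Your argument is correct and is exactly the derivation the paper intends: the corollary is stated as a consequence of the Bowman--De~Visscher--Orellana formula, and your size bookkeeping ($|\lambda|+|\mu|-|\nu| = 2|\alpha| + |\pi| \ge 0$ for any nonzero term) is the straightforward way to extract it. The only cosmetic remark is that you do not even need nonnegativity of the coefficients here, since every summand is shown to contain a vanishing factor and is therefore literally zero.
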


\begin{cor}[\cite{Lit}]\label{K=LR}
\[ \bar g_{\lambda,\mu,\nu} = c_{\lambda,\mu}^{\nu}\quad\text{if $|\lambda|+|\mu| = |\nu|$.}\]
\end{cor}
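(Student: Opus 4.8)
The plan is to deduce this directly from the Bowman--De~Visscher--Orellana formula by a count of partition sizes. First I would set $l = |\lambda|$, $m = |\mu|$, $n = |\nu|$, so the hypothesis reads $l + m = n$, and recall two vanishing facts: a Littlewood--Richardson coefficient $c^\xi_{\cdot,\cdot,\cdot}$ is zero unless the sizes of its three lower indices sum to $|\xi|$, and a Kronecker coefficient $g_{\pi,\rho,\sigma}$ is zero unless $|\pi| = |\rho| = |\sigma|$. Writing $a = |\alpha|$, $b = |\beta|$, $c = |\gamma|$ and $p = |\pi| = |\rho| = |\sigma|$, a nonzero summand of
\[ \bar g_{\lambda,\mu,\nu} = \sum_{\alpha,\beta,\gamma,\pi,\rho,\sigma} c^\lambda_{\alpha,\beta,\pi}\,c^\mu_{\alpha,\gamma,\rho}\,c^\nu_{\beta,\gamma,\sigma}\,g_{\pi,\rho,\sigma} \]
then forces
\[ a + b + p = l, \qquad a + c + p = m, \qquad b + c + p = n. \]

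Next I would add the first two of these equations, subtract the third, and use $l + m = n$ to get $2a + p = 0$, whence $a = 0$ and $p = 0$. So every surviving term has $\alpha = \pi = \rho = \sigma = \emptyset$, and $g_{\emptyset,\emptyset,\emptyset} = 1$. With $\alpha = \pi = \emptyset$ the first equation gives $b = l$ and $c^\lambda_{\emptyset,\beta,\emptyset} = [\mathfrak S_l(\beta), \mathfrak S_l(\lambda)] = \delta_{\beta,\lambda}$; likewise $c^\mu_{\emptyset,\gamma,\emptyset} = \delta_{\gamma,\mu}$, so $\gamma = \mu$ and $c = m$. Finally, since inducing along the trivial $\mathfrak S_0$-factor changes nothing, $c^\nu_{\beta,\gamma,\sigma} = c^\nu_{\lambda,\mu,\emptyset} = c^\nu_{\lambda,\mu}$. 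Substituting, all summands vanish except the one with $\alpha = \pi = \rho = \sigma = \emptyset$, $\beta = \lambda$, $\gamma = \mu$, and that summand equals $c^\nu_{\lambda,\mu}$, which is the claim.

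I do not expect a genuine obstacle here: the argument is pure size bookkeeping, and the only slightly delicate point---the value of the degenerate coefficients $g_{\emptyset,\emptyset,\emptyset}$ and $c^\lambda_{\emptyset,\beta,\emptyset}$---is immediate from the definitions since $\mathfrak S_0$ is trivial. I would also point out in passing that the same count shows every summand of the Bowman--De~Visscher--Orellana formula satisfies $|\lambda| + |\mu| - |\nu| = 2a + p \ge 0$; this reproves \autoref{triangle} and makes clear that \autoref{triangle} and \autoref{K=LR} are the two extremal cases of one and the same computation.
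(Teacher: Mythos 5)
Your argument is correct: the paper states this corollary with only a citation and no written proof, but it explicitly presents it as a "consequence" of the Bowman--De~Visscher--Orellana formula, and your size bookkeeping is precisely the intended derivation. The degenerate evaluations $g_{\emptyset,\emptyset,\emptyset}=1$ and $c^\lambda_{\emptyset,\beta,\emptyset}=\delta_{\beta\lambda}$ check out against the paper's definition of $c^\nu_{\alpha,\beta,\gamma}$, and your closing observation that $|\lambda|+|\mu|-|\nu|=2|\alpha|+|\pi|\ge 0$ for every nonvanishing summand does indeed give \autoref{triangle} as the other extremal case of the same computation.
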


\begin{cor}[{\cite[Corollary 5.1]{BDVO}}]\label{formula}
\[ \bar g_{\lambda,\mu,(k)} = \sum_{\substack{\alpha, \pi, \\ k_1+k_2+|\pi| = k}} c^\lambda_{\pi,\alpha, (k_1)} \,c^\mu_{\pi,\alpha ,(k_2)} \]
\end{cor}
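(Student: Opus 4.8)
The plan is to obtain the identity of \autoref{formula} by specializing the Bowman--De~Visscher--Orellana formula \cite[Corollary 4.5]{BDVO} to $\nu=(k)$ and evaluating the two families of coefficients that trivialize there. First, in that sum the triple $(\beta,\gamma,\sigma)$ only contributes when $c^{(k)}_{\beta,\gamma,\sigma}\neq 0$. By Frobenius reciprocity $c^{(k)}_{\beta,\gamma,\sigma}=[\mathfrak S_{|\beta|}(\beta)\otimes\mathfrak S_{|\gamma|}(\gamma)\otimes\mathfrak S_{|\sigma|}(\sigma),\ \Res^{\mathfrak S_k}_{\mathfrak S_{|\beta|}\times\mathfrak S_{|\gamma|}\times\mathfrak S_{|\sigma|}}\mathfrak S_k((k))]$, and since $\mathfrak S_k((k))$ is the trivial module and the restriction of the trivial module is again trivial, this multiplicity equals $1$ exactly when $\beta=(b)$, $\gamma=(c)$, $\sigma=(s)$ are all single rows (necessarily with $b+c+s=k$), and $0$ otherwise.

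Next I would evaluate the surviving Kronecker terms $g_{\pi,\rho,(s)}$. Since $\mathfrak S_s((s))$ is the trivial module and every irreducible $\mathfrak S_s$-module over $\C$ is self-dual, $g_{\pi,\rho,(s)}=[\mathfrak S_s(\pi)\otimes\mathfrak S_s(\rho),\mathfrak S_s((s))]=[\mathfrak S_s(\pi),\mathfrak S_s(\rho)]=\delta_{\pi,\rho}$; in particular the Kronecker factor forces $\rho=\pi$, hence $|\pi|=s$.

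Substituting both evaluations into \cite[Corollary 4.5]{BDVO}, the six summation partitions collapse to $\alpha$, a single partition $\pi=\rho$, and the three row lengths $b,c$ and $s=|\pi|$ subject to $b+c+|\pi|=k$, leaving $\bar g_{\lambda,\mu,(k)}=\sum c^\lambda_{\alpha,(b),\pi}\,c^\mu_{\alpha,(c),\pi}$. Using that Littlewood--Richardson coefficients are symmetric in their lower indices, one rewrites $c^\lambda_{\alpha,(b),\pi}=c^\lambda_{\pi,\alpha,(b)}$ and $c^\mu_{\alpha,(c),\pi}=c^\mu_{\pi,\alpha,(c)}$, and renaming $b=k_1$, $c=k_2$ gives exactly the asserted formula. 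The only point needing attention is the bookkeeping of sizes: one checks that the constraints $|\alpha|+|\beta|+|\pi|=|\lambda|$ and $|\alpha|+|\gamma|+|\rho|=|\mu|$ built into the original Littlewood--Richardson coefficients are automatically carried by the vanishing of $c^\lambda_{\pi,\alpha,(k_1)}$ and $c^\mu_{\pi,\alpha,(k_2)}$ in the target sum, so no terms are lost or introduced. I do not anticipate a genuine obstacle here --- this is a routine specialization of the main theorem, and the only mild subtlety is the combinatorial index-tracking just described.
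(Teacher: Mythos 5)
Your derivation is correct, and it is the natural (and surely intended) one: the paper itself gives no proof of this corollary, citing it directly as \cite[Corollary 5.1]{BDVO}, but your specialization of the displayed theorem \cite[Corollary 4.5]{BDVO} at $\nu=(k)$ — forcing $\beta,\gamma,\sigma$ to be single rows via Frobenius reciprocity and collapsing $g_{\pi,\rho,(s)}$ to $\delta_{\pi,\rho}$ via self-duality — is exactly how the formula follows. No gaps.
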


\subsection*{Stability of reduced Kronecker coefficients} We prove the stability of a certain sequence of reduced Kronecker coefficients here.

\begin{lem}\label{LRflip}\begin{enumerate}
\item $c^{\lambda[n]}_{\xi, (n-|\xi|)} =1$ always implies $c^\xi_{\lambda,(|\xi|-|\lambda|)} =1$. 
\item $ c^\xi_{\lambda,(|\xi|-|\lambda|)} =1$ implies $c^{\lambda[n]}_{\xi, (n-|\xi|)}=1$ if $n\ge |\lambda| + \xi_1$. 
\item In particular, $c^\xi_{\lambda,(|\xi|-|\lambda|)} =c^{\lambda[n]}_{\xi, (n-|\xi|)}$ if $n\ge |\lambda| + \xi_1$.
\end{enumerate}
\end{lem}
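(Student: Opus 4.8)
The plan is to reduce the whole statement to Pieri's rule. Recall that for partitions $\mu$ and $\nu$ the Littlewood--Richardson coefficient $c^\nu_{\mu,(r)}$ is always at most $1$, and it equals $1$ exactly when $|\nu| = |\mu| + r$, $\mu\subseteq\nu$, and the skew shape $\nu/\mu$ is a horizontal strip, i.e.\ when the parts interlace:
\[ \nu_1 \ge \mu_1 \ge \nu_2 \ge \mu_2 \ge \nu_3 \ge \cdots. \]
Applying this to the two coefficients in the lemma, $c^{\lambda[n]}_{\xi,(n-|\xi|)} = 1$ is equivalent to
\[ \lambda[n]_1 \ge \xi_1 \ge \lambda[n]_2 \ge \xi_2 \ge \lambda[n]_3 \ge \cdots, \]
and $c^{\xi}_{\lambda,(|\xi|-|\lambda|)} = 1$ is equivalent to
\[ \xi_1 \ge \lambda_1 \ge \xi_2 \ge \lambda_2 \ge \cdots . \]
Since $\lambda[n]_1 = n - |\lambda|$ and $\lambda[n]_{i+1} = \lambda_i$ for $i\ge 1$, the first chain of inequalities is precisely the second chain with the single extra inequality $n - |\lambda| \ge \xi_1$ prepended; this observation is the engine of all three parts.

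For part (a): if $c^{\lambda[n]}_{\xi,(n-|\xi|)} = 1$, then the first chain holds, hence a fortiori the second chain holds, so $\lambda\subseteq\xi$, the shape $\xi/\lambda$ is a horizontal strip of size $|\xi|-|\lambda|$, and $c^{\xi}_{\lambda,(|\xi|-|\lambda|)} = 1$; no hypothesis on $n$ is used. For part (b): assume $c^{\xi}_{\lambda,(|\xi|-|\lambda|)} = 1$, so the second chain holds and in particular $\xi_1 \ge \lambda_1$; the hypothesis $n \ge |\lambda| + \xi_1$ then gives $n - |\lambda| \ge \xi_1 \ge \lambda_1$, which at once shows that $\lambda[n]$ is a genuine partition and supplies the missing first inequality, so the full first chain holds and $c^{\lambda[n]}_{\xi,(n-|\xi|)} = 1$. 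For part (c): both coefficients lie in $\{0,1\}$ by Pieri's rule, and parts (a) and (b) together say that, under $n\ge|\lambda|+\xi_1$, one of them is $1$ if and only if the other is; hence they are equal.

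The one thing to be careful about is genuinely bookkeeping rather than a real obstacle: the row-shift built into the notation $\lambda[n]$ (first row of length $n-|\lambda|$, and $i$-th row equal to the $(i-1)$-st row of $\lambda$ for $i\ge 2$), and the need to check that the relevant skew diagrams are well defined, i.e.\ that $\xi\subseteq\lambda[n]$ and $\lambda\subseteq\xi$, with the asserted sizes $n-|\xi|$ and $|\xi|-|\lambda|$. All of these are read off immediately from the two interlacing chains once they are written out, and the $[n]$-parametrization issues (such as $\lambda[n]$ being a partition) are subsumed by the inequality $n-|\lambda|\ge\lambda_1$ established in part (b). The only external input is Pieri's rule, which should be recalled or cited explicitly since the argument rests entirely on a Littlewood--Richardson coefficient with a one-row factor being $0$ or $1$.
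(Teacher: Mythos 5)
Your proof is correct and follows essentially the same route as the paper: both arguments reduce everything to Pieri's rule for a one-row factor and the observation that $\lambda[n]$ is just $\lambda$ with the row $n-|\lambda|$ prepended. The only difference is cosmetic — you phrase the horizontal-strip condition as the interlacing chain $\nu_1\ge\mu_1\ge\nu_2\ge\cdots$, which makes the comparison of the two coefficients (one extra inequality $n-|\lambda|\ge\xi_1$, which also guarantees $\lambda[n]$ is a partition) slightly more transparent than the paper's box-removal bookkeeping.
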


\begin{proof}
Pieri's rule tells us that $c^{\lambda[n]}_{\xi,(n-|\xi|)} = 1$ if and only if we can obtain $\xi$ from $\lambda[n]$ by removing at most one box per column. If we remove from $\xi$ one box from from every column that was not used to obtain $\xi$ from $\lambda[n]$, we obtain $\lambda$. This implies that $c^\xi_{\lambda,(|\xi|-|\lambda|)}=1$. This proves (a).

Vice versa, if $c^\xi_{\lambda,(|\xi|-|\lambda|)}=1$, $\xi$ can be obtained from $\lambda$ by adding at most one box per column. Then we can add to $\xi$ a box in all the columns of $\xi$ that were not used to obtain $\xi$ from $\lambda$ to obtain $\lambda[|\lambda|+\xi_1]$. We could have of course added more boxes in the first row and so get that $c^{\lambda[n]}_{\xi,(n-|\xi|)} = 1$ for all $n\ge |\lambda|+\xi_1$. This proves (b).

(a) and (b) combined shows that $c^{\lambda[n]}_{\xi,(n-|\xi|)} = 1$ if and only if $c^\xi_{\lambda,(|\xi|-|\lambda|)}=1$ or all $n\ge |\lambda|+\xi_1$. Pieri's rule implies that both Littlewood--Richardson coefficients are $0$ if they are not $1$. This implies (c).
\end{proof}

\begin{prop}\label{Kroneckerstab}
Fix partitions $\lambda, \mu$ and a number $k \in \N$. The sequence $(\bar g_{\lambda[n], \mu,(n-k)})_n$ is independent of $n\ge |\lambda| + |\mu|$.
\end{prop}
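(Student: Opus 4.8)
The plan is to combine the formula of \autoref{formula} with the ``flip'' identity of \autoref{LRflip}. First I would apply \autoref{formula} with third partition $(n-k)$, which gives
\[ \bar g_{\lambda[n],\mu,(n-k)} \;=\; \sum_{\substack{\alpha,\pi\\ k_1+k_2+|\pi| = n-k}} c^{\lambda[n]}_{\pi,\alpha,(k_1)}\, c^\mu_{\pi,\alpha,(k_2)}, \]
and then locate the nonzero terms. A nonzero $c^\mu_{\pi,\alpha,(k_2)}$ forces $|\pi|+|\alpha|+k_2 = |\mu|$ and a nonzero $c^{\lambda[n]}_{\pi,\alpha,(k_1)}$ forces $|\pi|+|\alpha|+k_1 = n$; together with $k_1+k_2+|\pi| = n-k$ these three linear relations pin down $2k_2+|\pi| = |\mu|-k$ and $|\alpha| = k+k_2$ independently of $n$, while $k_1 = n-|\mu|+k_2$ carries all of the $n$-dependence. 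Hence, after reparametrising the sum by $(\pi,\alpha,k_2)$, the index set is finite and does not involve $n$ (and is empty, forcing the coefficient to vanish for all $n$, unless $|\mu|\ge k$), so the only factor that still depends on $n$ is $c^{\lambda[n]}_{\pi,\alpha,(k_1)}$.

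Next I would peel off a middle partition, using the associativity of Littlewood--Richardson coefficients recalled in the background (together with their symmetry):
\[ c^{\lambda[n]}_{\pi,\alpha,(k_1)} \;=\; \sum_{\xi} c^{\lambda[n]}_{\xi,(k_1)}\, c^\xi_{\pi,\alpha}, \]
where the surviving $\xi$ satisfy $|\xi| = |\pi|+|\alpha| = |\mu|-k_2$, hence $\xi_1\le|\xi|\le|\mu|$, and $k_1 = n-|\xi|$. For $n\ge|\lambda|+\xi_1$ --- and therefore for every $n\ge|\lambda|+|\mu|$ and every relevant $\xi$ simultaneously --- \autoref{LRflip}(c) rewrites $c^{\lambda[n]}_{\xi,(n-|\xi|)} = c^\xi_{\lambda,(|\xi|-|\lambda|)}$, an expression free of $n$. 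Substituting back gives, for all $n\ge|\lambda|+|\mu|$ (understanding the sequence to start once $\lambda[n]$ is a partition),
\[ \bar g_{\lambda[n],\mu,(n-k)} \;=\; \sum_{\substack{\pi,\alpha,k_2\\ 2k_2+|\pi| = |\mu|-k}}\ \Bigl(\ \sum_{|\xi| = |\mu|-k_2} c^\xi_{\lambda,(|\xi|-|\lambda|)}\, c^\xi_{\pi,\alpha}\ \Bigr)\ c^\mu_{\pi,\alpha,(k_2)}, \]
whose right-hand side is manifestly independent of $n$.

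I expect the only real work to be the bookkeeping in the first paragraph: one must verify carefully that the three linear constraints on $(\pi,\alpha,k_1,k_2)$ really do confine $(\pi,\alpha,k_2)$ to a fixed finite set and, crucially, bound $\xi_1$ by $|\mu|$, so that the single threshold $n\ge|\lambda|+|\mu|$ makes \autoref{LRflip}(c) applicable uniformly in $\xi$. The degenerate cases --- $|\mu|<k$, or no admissible triple $(\pi,\alpha,k_2)$, or no $\xi$ with $c^\xi_{\lambda,(|\xi|-|\lambda|)}\neq 0$ --- simply make the relevant sums empty, so $\bar g_{\lambda[n],\mu,(n-k)} = 0$ throughout the range, which is consistent with the asserted stability and requires no separate argument.
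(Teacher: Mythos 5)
Your proof is correct and follows essentially the same route as the paper's: apply \autoref{formula}, peel off $\xi$ via $c^{\lambda[n]}_{\pi,\alpha,(k_1)} = \sum_\xi c^{\lambda[n]}_{\xi,(n-|\xi|)}c^\xi_{\pi,\alpha}$, observe $|\xi|\le|\mu|$ so that $n\ge|\lambda|+|\mu|$ suffices for \autoref{LRflip}(c), and substitute. Your version is in fact more explicit than the paper's about the linear constraints pinning down the finite index set, which the paper only asserts.
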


\begin{proof}
From \autoref{formula}, we know that
\[ \bar g_{ \lambda[n],\mu,(n-k)} = \sum_{\substack{\alpha, \pi, \\ k_1+k_2+|\pi| = n-k}} c^{\lambda[n]}_{\pi,\alpha, (k_1)} \,c^\mu_{\pi,\alpha, (k_2)}. \]
Because $\mu$ is fixed there is a fixed finite set $S$ of tuples $(\pi,\alpha,k_2,\xi)$ such that $c^\mu_{\pi,\alpha, (k_2)} \neq 0$ and $c^\xi_{\pi,\alpha}\neq 0$. We observe that for every $\xi$ appearing in $S$, we know $|\xi| = |\pi|+|\alpha| \le |\mu|$. This implies that if $n\ge |\lambda| + |\mu|$ that $c^{\lambda[n]}_{\xi, (n-|\xi|)} = c^\xi_{\lambda,(|\xi|-|\lambda|)}$ by \autoref{LRflip}(c). Thus we get
\[ \bar g_{ \lambda[n],\mu,(n-k)} = \sum_{(\pi,\alpha,k_2,\xi)\in S} c^{\lambda[n]}_{\xi, (n-|\xi|)} c^\xi_{\pi,\alpha} c^\mu_{\pi,\alpha,(k_2)} =  \sum_{(\pi,\alpha,k_2,\xi)\in S} c^{\xi}_{\lambda, (|\xi|-|\lambda|)} c^\xi_{\pi,\alpha} c^\mu_{\pi,\alpha,(k_2)}\]
for $n\ge |\lambda|+|\mu|$, which is clearly independent of $n$.
\end{proof}

\begin{cor}\label{size}
$\bar g_{\lambda[n], \mu,(n-k)} =0$ if $|\lambda|>|\mu|$.
\end{cor}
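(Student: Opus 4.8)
The plan is to read off the vanishing directly from the closed formula established in the course of proving \autoref{Kroneckerstab}. Recall that for $n\ge |\lambda|+|\mu|$ the proof of that proposition puts the identity of \autoref{formula} in the form
\[ \bar g_{\lambda[n],\mu,(n-k)} = \sum_{(\pi,\alpha,k_2,\xi)\in S} c^{\xi}_{\lambda,(|\xi|-|\lambda|)}\, c^\xi_{\pi,\alpha}\, c^\mu_{\pi,\alpha,(k_2)}, \]
where $S$ is the finite set of tuples $(\pi,\alpha,k_2,\xi)$ with $c^\mu_{\pi,\alpha,(k_2)}\neq 0$ and $c^\xi_{\pi,\alpha}\neq 0$. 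The first step is to record the elementary degree bound on $S$: since $c^\mu_{\pi,\alpha,(k_2)}\neq 0$ forces $|\pi|+|\alpha|+k_2=|\mu|$ and $c^\xi_{\pi,\alpha}\neq 0$ forces $|\xi|=|\pi|+|\alpha|$, every $\xi$ occurring in $S$ satisfies $|\xi|\le|\mu|$.

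The second and final step is to feed in the hypothesis $|\lambda|>|\mu|$. For every tuple in $S$ we then get $|\xi|\le|\mu|<|\lambda|$, hence $|\xi|-|\lambda|<0$, so $(|\xi|-|\lambda|)$ is not a partition and the factor $c^{\xi}_{\lambda,(|\xi|-|\lambda|)}$ vanishes. (If one prefers not to appeal to the convention that a Littlewood--Richardson coefficient with a non-partition lower index is zero, one can instead observe via \autoref{LRflip} that this factor equals $c^{\lambda[n]}_{\xi,(n-|\xi|)}$, which is zero by Pieri's rule: obtaining $\xi$ from $\lambda[n]$ by deleting at most one box per column deletes at most $n-|\lambda|$ boxes, so any $\xi$ with $c^{\lambda[n]}_{\xi,(n-|\xi|)}\neq 0$ has $|\xi|\ge|\lambda|$.) Therefore every summand vanishes, so $\bar g_{\lambda[n],\mu,(n-k)}=0$ whenever $n\ge|\lambda|+|\mu|$, and the stability statement of \autoref{Kroneckerstab} upgrades this to all admissible $n$.

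I do not expect a genuine obstacle here: the corollary is a bookkeeping consequence of the formula already in hand. The only point needing a moment's attention is the degree accounting that pins down $|\xi|\le|\mu|$ for the $\xi$ that can appear, together with the (standard) convention making $c^{\xi}_{\lambda,(|\xi|-|\lambda|)}$ vanish when $|\xi|<|\lambda|$ — and, as noted, the clean alternative is to run the same argument straight from \autoref{formula} using Pieri's rule, which additionally yields the vanishing for every $n$ for which $\lambda[n]$ and $(n-k)$ are partitions.
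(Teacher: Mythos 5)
Your proof is correct and essentially identical to the paper's: both expand $\bar g_{\lambda[n],\mu,(n-k)}$ as the sum over the finite set $S$ from the proof of \autoref{Kroneckerstab}, record that every $\xi$ occurring satisfies $|\xi|\le|\mu|<|\lambda|$, and conclude that the factor $c^{\xi}_{\lambda,(|\xi|-|\lambda|)}$ (equivalently $c^{\lambda[n]}_{\xi,(n-|\xi|)}$, via \autoref{LRflip}(a)) must vanish. One small caveat: \autoref{Kroneckerstab} cannot ``upgrade'' the vanishing to $n<|\lambda|+|\mu|$, since it only asserts constancy of the sequence for $n\ge|\lambda|+|\mu|$; but your parenthetical alternative running the argument from the $c^{\lambda[n]}_{\xi,(n-|\xi|)}$ form via Pieri's rule is exactly the paper's route and already covers every admissible $n$, so nothing is missing.
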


\begin{proof}
As in the proof of \autoref{Kroneckerstab},
\[ \bar g_{\lambda[n], \mu,(n-k)}  = \sum_{(\pi,\alpha,k_2,\xi)\in S} c^{\lambda[n]}_{\xi, (n-|\xi|)} c^\xi_{\pi,\alpha} c^\mu_{\pi,\alpha,(k_2)} .\]
This can only be nonzero if there is a tuple $(\pi,\alpha,k_2,\xi)\in S$ such that $c^{\lambda[n]}_{\xi, (n-|\xi|)}=1$. By \autoref{LRflip}(a), this implies that $c^\xi_{\lambda,(|\xi|-|\lambda|)} =1$. But this is in contradiction to $|\xi| \le |\mu| < |\lambda|$.
\end{proof}

\subsection*{Corrected proofs} We now give corrections to the incorrect Proposition 4.8, Proposition 4.12, and Theorem 4.15 of \cite{diagalgrepstab}.

The following proposition replaces \cite[Proposition 4.8]{diagalgrepstab}.

\begin{prop}\label{4.8}
Let $\mu\in \Pi_{\Pa_m}$ and $n-i \ge|\mu|+ \mu_1$, then the following statements hold.
\begin{enumerate}
\item $[  \tau_{n,m}\Pa_{n}(\mu[n-i]), \Pa_m(\lambda)]= 0$ if $|\lambda| < m-i$.
\item $[  \tau_{n,m}\Pa_{n}(\mu[n-i]), \Pa_m(\lambda)]= 0$ if $|\lambda| = m-i < |\mu|$.
\item $[ \tau_{n,m}\Pa_{n}(\mu[n-i]), \Pa_m(\lambda)]=  \delta_{\lambda \mu}$ if $|\lambda| = m-i = |\mu|$.
\item $[ \tau_{n,m}\Pa_{n}(\mu[n-i]), \Pa_m(\lambda)]$ is independent of $n\ge |\lambda| + |\mu| + i$.
\end{enumerate}
\end{prop}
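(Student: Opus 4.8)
The plan is to translate the multiplicity $[\tau_{n,m}\Pa_n(\mu[n-i]),\Pa_m(\lambda)]$ into a statement about symmetric groups and then read off all four parts from the facts on (reduced) Kronecker coefficients collected above. First I would unwind the definition of $\tau_{n,m}$ from \cite{diagalgrepstab} together with the Schur--Weyl description of the (semisimple, since $R=\mathbb C$ and $\delta\notin\mathbb N$) partition algebras: the simple module $\Pa_n(\nu)$ is the multiplicity space of $\fS_N(\nu[N])$ in $(\mathbb C^N)^{\otimes n}$ for $N\gg 0$, and $\tau_{n,m}$ is compatible enough with this picture that $\dim\Hom_{\Pa_m}(\Pa_m(\lambda),\tau_{n,m}\Pa_n(\mu[n-i]))$ becomes an induction/Kronecker count for symmetric groups. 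I expect this to produce a formula of the shape
\[
[\tau_{n,m}\Pa_n(\mu[n-i]),\Pa_m(\lambda)]=
\begin{cases}
\bar g_{\mu[n-i],\,\lambda,\,(n-i-|\lambda|)} & \text{if }|\lambda|\ge m-i,\\[1mm]
0 & \text{if }|\lambda|<m-i
\end{cases}
\]
(or a finite nonnegative combination of such reduced Kronecker coefficients), where the vanishing in the second case is forced by the box arithmetic of $\tau_{n,m}$: each of the $n-m$ steps from level $n$ down to level $m$ lowers the size of the indexing partition by at most one, so any composition factor $\Pa_m(\lambda)$ of $\tau_{n,m}\Pa_n(\mu[n-i])$ must satisfy $|\lambda|\ge (n-i)-(n-m)=m-i$.

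Granting such a formula, the four items are quick. Part (a) is the second case. Part (b): if $|\lambda|=m-i<|\mu|$ then $|\mu|>|\lambda|$, so $\bar g_{\mu[n-i],\lambda,(n-i-|\lambda|)}=0$ by \autoref{size} (applied with ``core'' $\mu$, $N=n-i$, and third argument $(n-i)-|\lambda|$, so that the hypothesis becomes $|\mu|>|\lambda|$); the same conclusion holds termwise if a sum appears. Part (c): when $|\lambda|=m-i=|\mu|$ the third argument is $(n-m)$; expanding $\bar g_{\mu[n-i],\lambda,(n-m)}$ with \autoref{formula}, Pieri's rule forces the intermediate partitions $\xi$ with $c^{\mu[n-i]}_{\xi,(n-i-|\xi|)}\neq 0$ into the range of \autoref{LRflip}(c), and after flipping the only surviving term is $\xi=\alpha=\lambda=\mu$ (where $c^{\xi}_{\mu,\emptyset}=c^{\lambda}_{\emptyset,\alpha,\emptyset}=1$), so the multiplicity equals $\delta_{\lambda\mu}$. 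Part (d): the second case is constant in $n$, and in the first case \autoref{Kroneckerstab} (with ``core'' $\mu$, $N=n-i$, and $k=|\lambda|$) shows $\bar g_{\mu[n-i],\lambda,(n-i-|\lambda|)}$ is independent of $n-i\ge|\mu|+|\lambda|$, i.e.\ of $n\ge|\lambda|+|\mu|+i$; throughout, the hypothesis $n-i\ge|\mu|+\mu_1$ guarantees that $\mu[n-i]$ is a genuine partition, which is also exactly what is needed so that $c^{\mu[n-i]}_{\xi,(n-i-|\xi|)}\ne 0$ only for $\xi$ in the range of \autoref{LRflip}.

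The main obstacle is the first step: pinning down the precise formula for $[\tau_{n,m}\Pa_n(\mu[n-i]),\Pa_m(\lambda)]$ and handling the bookkeeping of hypotheses. This is exactly where \cite{diagalgrepstab} slipped---the trivial $\Pa_n$-module is $\Pa_n((n))=\Pa_n(\emptyset[n])$ rather than $\Pa_n(\emptyset)$---so the corrected argument must consistently index the relevant simple modules by the padded partitions $\mu[n-i]$ and identify the associated symmetric-group irreducibles as $\fS_N((\mu[n-i])[N])$.
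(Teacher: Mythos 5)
Your overall strategy is the paper's: identify $[\tau_{n,m}\Pa_n(\mu[n-i]),\Pa_m(\lambda)]$ with a reduced Kronecker coefficient and then quote the corollaries on $\bar g$. But the formula you guess is not the correct one, and this is exactly the step where precision matters. The paper pairs the branching rule \cite[Corollary 3.11]{diagalgrepstab} with \cite[Lemma 4.6]{diagalgrepstab} (which identifies $\tau_{n,m}W$ with the trivial-isotypic part of $\Res^{\Pa_n}_{\Pa_m\otimes\Pa_{n-m}}W$) to get, uniformly in $\lambda$,
\[ [\tau_{n,m}\Pa_n(\mu[n-i]),\Pa_m(\lambda)] \;=\; \bar g_{\lambda,(n-m),\mu[n-i]}. \]
The one-row partition in the second slot is $(n-m)$ because it labels the trivial $\Pa_{n-m}$-module --- the very point of the corrigendum --- so it cannot depend on $\lambda$. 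Your candidate $\bar g_{\mu[n-i],\lambda,(n-i-|\lambda|)}$ agrees with this only when $|\lambda|=m-i$. For $|\lambda|>m-i$ it is a different quantity: since $|\lambda|+(n-i-|\lambda|)=|\mu[n-i]|$, \autoref{K=LR} collapses your expression to the single Littlewood--Richardson coefficient $c^{\mu[n-i]}_{\lambda,(n-i-|\lambda|)}$, which by Pieri is always $0$ or $1$; the true multiplicity is a genuine reduced Kronecker coefficient and need not be. Relatedly, your case split is not a feature of the correct formula: the vanishing in (a) is not built into the definition but is \autoref{triangle} applied to $\bar g_{\lambda,(n-m),\mu[n-i]}$ (as $|\lambda|+(n-m)<n-i$ when $|\lambda|<m-i$); your ``each step lowers the size by at most one'' heuristic is not a proof.

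Once the formula is corrected, your arguments for (b) and (d) go through: for (d) the paper likewise applies \autoref{Kroneckerstab} with $N=n-i$ and $k=m-i$ to get independence for $n\ge|\lambda|+|\mu|+i$, and for (b) your appeal to \autoref{size} is a legitimate alternative to the paper's route. For (b) and (c) the paper argues more directly than you do: when $|\lambda|=m-i$ one has $|\lambda|+(n-m)=n-i$, so \autoref{K=LR} gives $\bar g_{\lambda,(n-m),\mu[n-i]}=c^{\mu[n-i]}_{\lambda,(n-m)}$, and Pieri's rule (remove $n-m$ boxes, no two in a column, from a diagram whose first row has $n-i-|\mu|$ boxes) forces $|\mu|\le|\lambda|$ for (b) and $\lambda=\mu$ for (c). Your detour for (c) through \autoref{formula} and \autoref{LRflip} is unnecessary, and your assertion that ``the only surviving term is $\xi=\alpha=\lambda=\mu$'' would itself need an argument.
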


\begin{proof}
Recall that the trivial $\Pa_{n-m}$-module $\C$ is indexed by $(n-m)$. The branching rule  \cite[Corollary 3.11]{diagalgrepstab} implies that
\[ [\Res_{\Pa_m\otimes \Pa_{n-m}}^{\Pa_{n}} \Pa_{n}(\mu[{n}-i]), \Pa_m(\lambda) \otimes \C] = \bar g_{\lambda, (n-m),\mu[{n}-i]}.\]
 \cite[Lemma 4.6]{diagalgrepstab}  gives
\[ [ \tau_{{n},m}\Pa_{n}(\mu[{n}-i]), \Pa_m(\lambda)] = \bar g_{\lambda,(n-m),\mu[{n}-i]}.\]

If $|\lambda|<m-i$, then
\[ |\lambda| + n-m < n-i = |\mu[n-i]|\]
and thus  $\bar g_{\lambda,(n-m),\mu[{n}-i]}=0$ by \autoref{triangle}. This implies (a).

If $|\lambda|=m-i$, then
\[ |\lambda| + n-m = n-i = |\mu[n-i]|\]
and thus  $\bar g_{\lambda,(n-m),\mu[{n}-i]}=c_{\lambda,(n-m)}^{\mu[n-i]}$ by \autoref{K=LR}. By Pieri's rule, this is nonzero if and only if $\lambda$ can be obtained from $\mu[n-i]$ by removing $n-m$ boxes, none of which are in the same column. In particular, the first row of $\mu[n-i]$ which has $n-i-|\mu|$ boxes, has to have at least $n-m$ boxes. This implies that $|\mu| \le m-i = |\lambda|$ and so (b).

If $|\lambda|=m-i =|\mu|$, Pieri's rule tells us that $\lambda$ can be obtained from $\mu[n-i]$ by removing $n-m$ boxes, none in the same column. Because the first row of $\mu[n-i]$ has exactly $n-i-|\mu| = n-m$ boxes, $\mu = \lambda$. This proves (c).

In general,  $\bar g_{\lambda, (n-m), \mu[n-i]}$ is independent of $n \ge |\lambda|+|\mu| +i$ by \autoref{Kroneckerstab}. This proves (d).
\end{proof}

The following proposition replaces  \cite[Proposition 4.12]{diagalgrepstab}

\begin{prop}\label{4.12}
Let $V$ be a $\CP$-module finitely generated in degrees $\le g$ and $\Pa_n(\lambda[n-i])$ is a constituent of $V_n$, then $ i \le 2g$ and $|\lambda| \le g$.
\end{prop}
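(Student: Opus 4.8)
The plan is to reduce to free $\CP$-modules and then read off the two bounds from the vanishing properties of reduced Kronecker coefficients proved above, using that $\bar g$ is symmetric in its three arguments.

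First I would reduce to the free case. Since $V$ is finitely generated in degrees $\le g$, there is a surjection of $\CP$-modules $\bigoplus_{j}\CP(m_j,-)\surject V$ with all $m_j\le g$, where $\CP(m,-)$ denotes the representable (free) $\CP$-module on a generator in degree $m$. Every composition factor of $V_n$ is then a composition factor of some $\CP(m_j,n)$, so it suffices to show: if $\Pa_n(\lambda[n-i])$ is a constituent of $\CP(m,n)$ with $m\le g$, then $i\le 2m$ and $|\lambda|\le m$. For this I would invoke the $\Pa_n$-module structure of the free module from \cite{diagalgrepstab}: $\CP(m,n)$ has the same composition factors as a module induced from $\Pa_m\otimes\Pa_{n-m}$ (as a subalgebra of $\Pa_n$) along a module that is the trivial module — indexed by the partition $(n-m)$ — on the $\Pa_{n-m}$-factor. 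Combining Frobenius reciprocity with the branching rule \cite[Corollary 3.11]{diagalgrepstab} used in the proof of \autoref{4.8}, namely $[\Res^{\Pa_n}_{\Pa_m\otimes\Pa_{n-m}}\Pa_n(\lambda[n-i]),\Pa_m(\kappa)\otimes\C]=\bar g_{\kappa,(n-m),\lambda[n-i]}$, one gets that a constituent $\Pa_n(\lambda[n-i])$ of $\CP(m,n)$ forces $\bar g_{\kappa,(n-m),\lambda[n-i]}\neq 0$ for some partition $\kappa$ with $|\kappa|\le m$.

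It then remains to feed this nonvanishing into the two corollaries. Writing the coefficient, by symmetry, as $\bar g_{\lambda[n-i],\kappa,(n-m)}$, \autoref{triangle} forces $(n-i)+|\kappa|\ge n-m$, whence $i\le m+|\kappa|\le 2m\le 2g$. For the second bound, \autoref{size} applied with $\lambda[n-i]$ in the role of the ``$[\,\cdot\,]$''-argument and $(n-m)$ in the role of the single-row argument gives $\bar g_{\lambda[n-i],\kappa,(n-m)}=0$ whenever $|\lambda|>|\kappa|$, hence $|\lambda|\le|\kappa|\le m\le g$. Here \autoref{size} applies once $n$ is large relative to $g$ and $|\lambda|$; the finitely many remaining triples $(n,\lambda,i)$ — in particular the boundary case $i=m+|\kappa|$, where \autoref{K=LR} instead applies and forces $\lambda=\emptyset$ — can be checked directly, or bypassed by first noting that the multiplicities in a free module are nondecreasing in $n$ so that only the stable range is relevant.

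I expect the step needing the most care is the first one: pinning down the $\Pa_n$-module structure of the free module $\CP(m,n)$ and, in the process, correctly tracking that the trivial $\Pa_{n-m}$-module is indexed by $(n-m)$ — precisely the single-row partition that makes \autoref{triangle} and \autoref{size} bite, and whose misidentification with $\emptyset$ was the original mistake. Once that is set up, the reduced Kronecker bookkeeping is routine.
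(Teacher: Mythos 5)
Your proposal is correct and follows essentially the same route as the paper: reduce to the free modules, identify them as induced from $\Pa_m\otimes\Pa_{n-m}$ with the trivial factor indexed by $(n-m)$, convert multiplicities into reduced Kronecker coefficients $\bar g_{\mu,(n-m),\lambda[n-i]}$ via Frobenius reciprocity and the branching rule, and then apply \autoref{triangle} and \autoref{size}. The only superfluous part is your hedging about \autoref{size} requiring $n$ large --- as stated and proved it holds unconditionally, so no boundary cases need separate treatment.
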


\begin{proof}
It suffices to prove that if $\Pa_n(\lambda[n-i])$ is a constituent of $M(m)_n$, then $ i \le 2m$ and $|\lambda| \le m$.

Recall that 
\[M(m)_n = \Pa_n \otimes_{\Pa_{n-m}} \mathbb C = \Ind_{\Pa_m\otimes \Pa_{n-m}}^{\Pa_n} \Pa_m\otimes \mathbb C.\]
Thus we need to consider the constituents $\Pa_n(\lambda[n-i])$ of
\[ \Ind_{\Pa_m\otimes \Pa_{n-m}}^{\Pa_n} \Pa_m(\mu)\otimes \Pa_{n-m}(n-m)\]
for all partitions $\mu\in \Pi_{\Pa_m}$. From Frobenius reciprocity and the branching rule we see that
\begin{align*}
 &[ \Ind_{\Pa_m\otimes \Pa_{n-m}}^{\Pa_n} \Pa_m(\mu)\otimes \Pa_{n-m}(n-m),\Pa_n(\lambda[n-i])] \\
=& [ \Res_{\Pa_m\otimes \Pa_{n-m}}^{\Pa_n}\Pa_n(\lambda[n-i]),  \Pa_m(\mu)\otimes \Pa_{n-m}(n-m)] \\
 = &\bar g_{\mu,(n-m),\lambda[n-i]}.
\end{align*}
So it suffices to show that $\bar g_{\mu,(n-m),\lambda[n-i]}=0$ if $i>m+|\mu|$ or $|\lambda|> |\mu|$ because $|\mu|\le m$. 

The inequality $i>m+|\mu|$ is equivalent to $|\mu| +(n-i)< (n-m)$ which by \autoref{triangle} implies that $\bar g_{\mu,(n-m),\lambda[n-i]}=0$.

The inequality $|\lambda|> |\mu|$ implies that $\bar g_{\mu,(n-m),\lambda[n-i]}=0$ by \autoref{size}.
\end{proof}

We now give a correction of \cite[Theorem 4.15]{diagalgrepstab}. 

\begin{thm}\label{partitionthm}
Let $V$ be a $\CP$-module finitely generated in degrees $\le g$ with relations in degrees $\le r$, then $V$ is representation stable from $\le 3g + \max(2g,2r)$ on.
\end{thm}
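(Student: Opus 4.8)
The plan is to mimic the structure of the original proof of \cite[Theorem 4.15]{diagalgrepstab}, replacing the pieces that relied on the incorrect identification of the trivial module with the corrected \autoref{4.8} and \autoref{4.12}. First I would reduce to understanding the behaviour of the functor $\tau_{n,m}$ on the pieces of a finite presentation. Since $V$ is finitely generated in degrees $\le g$ with relations in degrees $\le r$, there is an exact sequence $M(r)^{\oplus} \to M(g)^{\oplus} \to V \to 0$ of $\CP$-modules, and more usefully a two-step resolution capturing the presentation. By \autoref{4.12}, every constituent $\Pa_n(\lambda[n-i])$ of $V_n$ has $i \le 2g$ and $|\lambda| \le g$; combined with the definition of $\overline\Pi_{n-i}$ (so $\lambda_1 \le n - i - |\lambda|$), this already gives the \emph{finiteness} statement of the Remark, and tells us the relevant range of $(\lambda,i)$ is bounded independently of $n$.

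Next I would establish \textbf{multiplicity stability}. The key point is that the multiplicity $\alpha_{\lambda,i,n}$ of $\Pa_n(\lambda[n-i])$ in $V_n$ can be computed by applying the ``restriction-type'' functor $\tau_{n,m}$ (or rather the composite that extracts $\Pa_m(\lambda)$-isotypic information) to the finite presentation, and by \autoref{4.8}(d) the numbers $[\tau_{n,m}\Pa_n(\mu[n-i]),\Pa_m(\lambda)]$ are eventually constant in $n$ — explicitly, constant once $n \ge |\lambda| + |\mu| + i$. Since the generators live in degree $\le g$ (so $|\mu| \le g$) and $|\lambda| \le g$, $i \le 2g$, this constancy kicks in once $n \ge g + g + 2g = 4g$; the relations contribute a parallel bound involving $r$. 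Taking alternating sums over the (bounded, $n$-independent) set of constituents appearing in the presentation, multiplicity stability holds once $n$ exceeds the appropriate combination $3g + \max(2g,2r)$.

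Then I would handle \textbf{injectivity} and \textbf{surjectivity} of $\phi_n\colon V_n \to V_{n+1}$. These should follow formally from multiplicity stability together with the structure of $\CP$-modules, much as in the original argument: surjectivity of $\Ind_{\Pa_n}^{\Pa_{n+1}}\phi_n$ is essentially automatic for modules generated in degree $\le g$ once $n \ge g$ (the generators propagate), using that $M(m)$ visibly has this property and it passes to quotients; injectivity of $\phi_n$ is the delicate direction — one shows the kernel is trivial by comparing multiplicities on both sides via \autoref{4.8}(a)--(c), which pin down exactly which constituents of $V_n$ can die, and then using the relations-in-degree-$\le r$ hypothesis to bound the failure. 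The threshold $3g + \max(2g,2r)$ is assembled to make all three conditions hold simultaneously.

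The main obstacle I expect is \textbf{injectivity}, and more precisely getting the numerology of the stable range exactly right. Multiplicity stability is ``soft'' once one has \autoref{4.8}(d) and \autoref{4.12}, but proving $\phi_n$ is injective requires genuinely using the presentation (not just finite generation) — one must rule out new constituents of $\ker\phi_n$ that are not forced by the relations, and this is where \autoref{4.8}(a)--(c) (the vanishing of $[\tau_{n,m}\Pa_n(\mu[n-i]),\Pa_m(\lambda)]$ outside the diagonal $|\lambda| = m - i = |\mu|$) does the real work, ensuring the map on each isotypic component is an isomorphism in the stable range. Threading the bounds $i \le 2g$, $|\lambda| \le g$, $|\mu| \le g$ (for generators) and $|\mu| \le r$ (for relations) through \autoref{4.8}(d)'s condition $n \ge |\lambda| + |\mu| + i$ is what produces $3g$ from the generator side and $g + 2r$ (hence $\max(2g,2r)$ added to $3g$ after accounting for the shift $\phi_n$) from the relation side; checking this carefully is the bulk of the work.
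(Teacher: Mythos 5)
Your sketch identifies the right supporting results (\autoref{4.8}, \autoref{4.12}, the numerology of $3g+\max(2g,2r)$), but it is missing the two mechanisms that actually drive the paper's proof, and one of your proposed routes would fail as stated. First, the way the presentation enters is not through an alternating sum over a resolution $M(r)^{\oplus}\to M(g)^{\oplus}\to V\to 0$: that sequence is only right exact, so the multiplicities of simples in $V_n$ are \emph{not} the alternating sum of multiplicities in the two free terms (the kernel of $M(r)^{\oplus}\to M(g)^{\oplus}$ is uncontrolled, and in this semisimple setting that kernel contributes). What the paper uses instead is the stability degree: by \cite[Proposition 4.4]{diagalgrepstab}, $V$ has stability degree $\le\max(2g,2r)$, meaning $\tau_{n,a}V_n\to\tau_{n+1,a}V_{n+1}$ is an isomorphism for $n\ge\max(2g,2r)+a$. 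This single fact, combined with exactness of $\tau_{n,a}$, gives $\tau_{n,a}K_n=0$ and $\tau_{n+1,a}C_n=0$ in that range, and then \autoref{4.8}(c) (with $a=|\lambda|+i\le 3g$, using \autoref{4.12}) detects any putative constituent of $K_n$ or $C_n$ inside $\tau_{n,a}K_n$ or $\tau_{n+1,a}C_n$, a contradiction. Note in particular that injectivity does \emph{not} follow from ``comparing multiplicities on both sides'': since $\dim\Pa_n(\lambda[n-i])\ne\dim\Pa_{n+1}(\lambda[n+1-i])$, equal multiplicities plus surjectivity give no control on the kernel; the exactness of $\tau$ is essential.

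Second, for multiplicity stability you invoke only \autoref{4.8}(d), but constancy of the numbers $[\tau_{n,m}\Pa_n(\mu[n-i]),\Pa_m(\lambda)]$ together with constancy of $[\tau_{n,m}V_n,\Pa_m(\lambda)]$ does not by itself yield constancy of the individual $\alpha_{\mu,i,n}$: one must invert the linear system expressing $[\tau_{n,m}V_n,\Pa_m(\lambda)]$ in terms of the $\alpha_{\mu,i,n}$. The paper does this by a double induction (on $i$ descending, then $|\mu|$ ascending), using the triangularity supplied by \autoref{4.8}(a)--(c): the pairing vanishes for $i<j$ and for $i=j$, $|\mu|>|\lambda|$, and equals $\delta_{\lambda\mu}$ on the diagonal $|\lambda|=m-i=|\mu|$, so $\alpha_{\lambda,j,n}$ is isolated as $[\tau_{n,m}V_n,\Pa_m(\lambda)]$ minus terms already known to be constant by induction. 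Without this inductive inversion (or an equivalent unitriangularity argument), the step from eventual constancy of the $\tau$-multiplicities to eventual constancy of the $\alpha$'s is a genuine gap. Your surjectivity remark (that $\Ind_{\Pa_n}^{\Pa_{n+1}}\phi_n$ is onto for $n\ge g$ by finite generation) is essentially correct, but the paper routes it through the cokernel $C_n$ and the stability degree so that all three conditions share the same threshold.
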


\begin{proof}
We first prove injectivity and surjectivity. We need to prove that
\[ K_n = \ker( V_n \longrightarrow V_{n+1}) \quad \text{and}\quad C_n = \coker( \Ind_{\Pa_n}^{\Pa_{n+1}} V_n \longrightarrow V_{n+1})\]
vanish for $n\ge 3g+\max(2g,2r)$. Because $\tau_{n,a}$ is exact, we see that 
\[ \tau_{n,a} K_n = \ker( \tau_{n,a} V_n \longrightarrow \tau_{n,a} V_{n+1}) \quad \text{and}\quad \tau_{n+1,a}C_n = \coker( \tau_{n+1,a} \Ind_{\Pa_n}^{\Pa_{n+1}} V_n \longrightarrow \tau_{n+1,a} V_{n+1}).\]
By  \cite[Proposition 4.4]{diagalgrepstab}, $V$ has stability degree $\le \max(2g,2r)$ and so the composition
\[ \tau_{n,a} V_n \longrightarrow \tau_{n,a} V_{n+1} \longrightarrow \tau_{n+1,a} V_{n+1}\]
from \cite[Equation (4.1)]{diagalgrepstab} is an isomorphism for all $a$ and all $n\ge \max(2g,2r)+a$, so in particular, $\tau_{n,a} K_n = 0$ for all $a$ and all $n\ge \max(2g,2r)+a$. We can write $\bar\phi_n$ also as the composition
\[   \bar\phi_n\colon \tau_{{n},a} V_{n}  \longrightarrow \tau_{{n}+1,a} \Ind_{\Pa_{n}}^{\Pa_{{n}+1}} V_{n}  \longrightarrow \tau_{{n}+1,a} V_{{n}+1}\]
as in \cite[Equation (4.2)]{diagalgrepstab}. Thus $\tau_{{n}+1,a} C_{n} = 0$ for all $a$ and all $n\ge \max(2g,2r)+a$. 

Assume to the contrary of $K_n =0$ that $\Pa_n(\lambda[n-i])$ is a constituent of $K_n$. It then must also be a constituent of $V_n$ as $K_n \subset V_n$. By  \autoref{4.12}, this means that $i\le 2g$ and $|\lambda| \le g$ as $V$ is finitely generated in degrees $\le g$. Set $a=|\lambda|+i \le 3g$ to get that
\[ [\tau_{n,a} \Pa_n(\lambda[n-i]), \Pa_a(\lambda)]=1\]
by \hyperref[prop:partition tau]{\autoref{4.8}(c)}. As $\tau_{n,a}$ is exact, this means that $\Pa_a(\lambda)$ is a constituent of $\tau_{n,a} K_{n}$. But we have seen above, the latter is zero as
\[ n \ge 3g+\max(2g,2r) \ge \max(2g,2r)+a.\]

Likewise, let us assume to the contrary of $C_n = 0$ that $\Pa_{n+1}(\lambda[n+1-i])$ is a constituent of $C_n$. Because $C_n$ is a quotient of $V_{n+1}$, we get $i\le 2g$ and $|\lambda|\le g$ by \autoref{4.12}. Again, set  $a=|\lambda|+i \le 3g$ to get 
\[ [\tau_{n+1,a} \Pa_{n+1}(\lambda[n+1-i]) , \Pa_a(\lambda)] = 1\]
by \hyperref[prop:partition tau]{\autoref{4.8}(c)}. Then, $\Pa_a(\lambda)$ is a constituent of $\tau_{n+1,a} C_{n}$ which again is zero as
\[ n \ge 3g+\max(2g,2r) \ge\max(2g,2r)+ a.\]

These contradictions prove that $K_n$ and $C_n$ are in fact zero for $n\ge 3g+ \max(2g,2r)$.

We now prove multiplicity stability. Let us decompose
\[ V_n = \bigoplus_{0\le i \le n}\bigoplus_{\mu \in \overline\Pi_{n-i}} \Pa_n(\mu[n-i])^{\oplus \alpha_{\mu,i,n}}\]
into its simple constituents, where $\alpha_{\mu,i,n}$ is the multiplicity of $\Pa_n(\mu[n-i])$ in $V_n$.
By  \autoref{4.12}, we can assume $ i \le 2g$ and $|\mu| \le g$. We will prove that $\alpha_{\mu,i,n}$ is independent of $n\ge 3g + \max(2g,2r)$ by induction over $i$ going down and then $|\mu|$ going up. For $i>2g$ or $|\mu|<0$, there is nothing to prove as $\alpha_{\mu,i,n}=0$ by \autoref{4.12}. This establishes the base of the induction. For the induction step, fix $0\le j\le 2g$ and $\lambda\in \overline\Pi_{n-j}$ with $|\lambda|\le g$ and assume the assertion has been proved for all $(\mu,i)$ with $i>j$ or $i=j$ and $|\mu|<|\lambda|$.

Set $m := |\lambda| + j \le 3g$. We will count the multiplicity of $\Pa_m(\lambda)$ in $\tau_{n,m} V_n$. The following equation uses that $\tau_{n,m}$ is additive.
\begin{align}
&\big[ \tau_{n,m} V_n\,,\,\Pa_m(\lambda)\big]  \label{eqPatauV1}\\
= &\phantom{+}\hspace{.6em} \sum_{0\le i <j}\sum_{\mu\in \overline\Pi_{n-i}} \big[ \tau_{n,m} \Pa_n(\mu[n-i])\,,\,\Pa_m(\lambda) \big]\cdot  \alpha_{\mu,i,n}\label{eqPatauV2}\\
&+ \phantom{\sum_{j<i \le 2g}}\sum_{\substack{\mu\in \overline\Pi_{n-j}\\|\mu|>|\lambda|}} \big[ \tau_{n,m} \Pa_n(\mu[n-j]) \,,\,\Pa_m(\lambda)\big]\cdot \alpha_{\mu,j,n}\label{eqPatauV3a}\\
&+ \phantom{\sum_{j<i \le 2g}}\sum_{\substack{\mu\in \overline\Pi_{n-j}\\|\mu|=|\lambda|}} \big[ \tau_{n,m} \Pa_n(\mu[n-j]) \,,\,\Pa_m(\lambda)\big]\cdot \alpha_{\mu,j,n}\label{eqPatauV3}\\
&+ \phantom{\sum_{j<i \le 2g}}\sum_{\substack{\mu\in \overline\Pi_{n-j}\\|\mu|<|\lambda|}} \big[ \tau_{n,m} \Pa_n(\mu[n-j]) \,,\,\Pa_m(\lambda)\big]\cdot \alpha_{\mu,j,n}\label{eqPatauV3b}\\
&+ \sum_{j<i \le 2g}\sum_{\mu\in \overline\Pi_{n-i}} \big[ \tau_{n,m} \Pa_n(\mu[n-i])\,,\,\Pa_m(\lambda) \big] \cdot  \alpha_{\mu,i,n}\label{eqPatauV4}
\end{align}
From   \cite[Proposition 4.4]{diagalgrepstab} we know that $V$ has stability degree $\le \max(2g,2r)$ and so \eqref{eqPatauV1} is independent of 
\[n\ge\max(2g,2r)+3g \ge \max(2g,2r)+m.\] 

\autoref{4.8}(a) says that \eqref{eqPatauV2} is zero because $i<j$ implies
\[  |\lambda| =m -j < m-i.\]

\autoref{4.8}(b) says that \eqref{eqPatauV3a} is zero because $|\lambda| =m-j<|\mu|$.

 \autoref{4.8}(c) says that \eqref{eqPatauV3} is $\alpha_{\lambda,j,n}$.

\autoref{4.8}(d) and the induction hypothesis provide that \eqref{eqPatauV3b} and \eqref{eqPatauV4} are independent of 
\[ n \ge  \max( |\lambda|+|\mu|+i, 3g + \max(2g,2r)). \]

Because all terms in this equation are finite and $ |\lambda|+|\mu|+i \le 4g$, we get that $\alpha_{\lambda,j,n}$ is independent of $n \ge   3g + \max(2g,2r)$.
\end{proof}

\bibliographystyle{halpha}
\bibliography{repstab}

\end{document}